\newtheorem{theorem}{Theorem}[section]
\newtheorem{lemma}[theorem]{Lemma}
\newtheorem{corollary}[theorem]{Corollary}
\theoremstyle{definition}
\newtheorem{definition}[theorem]{Definition}
\newtheorem{remark}[theorem]{Remark}
\numberwithin{equation}{section}
\begin{document}

\title[Shellable and Cohen-Macaulay  complete t-partite graphs]{Shellable and Cohen-Macaulay complete t-partite graphs}
\author[ Seyyede Masoome Seyyedi, Farhad Rahmati, Mahdis Saeedi]{ Seyyede Masoome Seyyedi, Farhad Rahmati, Mahdis Saeedi}

\address{ Faculty of  Mathematics and Computer
Science, Amirkabir University of Technology, P. O. Box 15875-4413,
Tehran, Iran.} \email{mseyyedi@aut.ac.ir}

\address{ Faculty of  Mathematics and Computer
Science, Amirkabir University of Technology, P. O. Box 15875-4413,
Tehran, Iran.}

\email{frahmati@aut.ac.ir}

\address{ Faculty of  Mathematics and Computer
Science, Amirkabir University of Technology, P. O. Box 15875-4413,
Tehran, Iran.}

\email{mahdis.saeedi@aut.ac.ir}

\keywords{ Cohen-Macaulay, Shellable, Vertex decomposable, edge ideal}
\subjclass[2000]{13C14, 52B22, 13D02, 05D38.}
\begin{abstract}
Let $G$ be a simple undirected graph. We find the number of
maximal independent sets in complete t-partite graphs. We will
show  that vertex decomposability and shellability are equivalent
in this graphs. Also, we obtain an equivalent condition for being
Cohen-Macaulay in complete t-partite graphs.
\end{abstract} \maketitle
\section{Introduction}
\noindent Let $G$ be a (simple, undirected, finite) graph. The
sets of vertices and edges of a graph $G$ are denoted by $V(G)$
and $E(G)$, respectively. A set $S\subseteq V(G)$ is independent
if no edge of $G$ has both its endpoints in $S$. An independent
set $S$ is maximal if no independent set of $G$ properly contain
$S$. Let m.i.s be the set of all maximal independent sets in $G$.
Let $i_{M}(G)$ denote the number of m.i.s in $G$.
\begin{definition}
We say that a graph is complete t-partite graph if its vertex set $V(G)$ can be partitioned into disjoint subsets $V_{1},\ldots,V_{t}$ such that $|V_i|=n_{i}$ for all $1\leq
i\leq t$ and $G$ contains every edge joining $V_i$ and $V_j$ for all $i\neq j$ and $1\leq i,j\leq t$.
\end{definition}
\begin{definition}
 We say that a graph $G$ is t-partite graph if its vertex set $V(G)$
 can be partitioned into disjoint subsets $V_1,\ldots,V_t$ such
that every edge of $G$ joins a vertex of $V_i$ to a vertex of
$V_j$ for $i\neq j$ and $1\leq i,j\leq t$.
 Moreover, in this paper we consider $t$ as the smallest number that has
above properties.
\end{definition}
Let $G$ be a simple undirected graph with the vertex set $V(G)
=\{x_1,\ldots,x_n\}$ and the edge set $E(G)$. The edge ideal of
$G$ is $I(G)$ as an ideal of
 the polynomial ring $R=k[x_1,\ldots,x_n]$ over a
field $k$ that generated by  all monomials $x_{i}x_{j}$ such
that $x_{i}$ is adjacent to $x_{j}$ in $G$. We can associate to G
the simplicial complex $\Delta_G$ that the faces of $\Delta_G$
are independent sets or stable sets of G and $I_{\Delta_G} =
I(G)$.
\begin{definition}
A simplicial complex $\Delta$ is called shellable if the facets
(maximal faces) of $\Delta$ can be ordered $F_1,\ldots,F_s$ such
that for all $1\leq i<j\leq s$, there exists some $v\in F_j\setminus
F_i$ and some $l\in \{1,\ldots,j-1\}$ with $F_j\setminus
F_l=\{v\}$.\\ We call $F_1,\ldots,F_s$ a shelling of $\Delta$ when
the facets have been ordered with respect to the definition of
shellable.
\end{definition}
A graph $G$ is called shellable, if the simplicial complex $\Delta
_G$ is a shellable simplicail complex.
\begin{definition}
A k-coloring of a graph $G$ is a labeling $f: V (G)\rightarrow S$
where $|S|= k$. The labels are colors; the
 vertices of one color
form a color class. A k-coloring is proper if adjacent vertices
have different labels. A graph is k-colorable if it has a proper
k-coloring. The chromatic number of graph G, $\chi(G)$, is the
least $k$ such that G is k-colorable.
\end{definition}
The problem finding $i_{M}(G)$, even lower or upper bounds  for it, is considerable lately. $i_{M}(G)$ is often known as the Fibonacci
number, or in mathematical chemistry as the Merrifield-Simmons index or the $\sigma$-index.
The study was initiated by Prodinger and Tichy in [9]. In the paper, first we present a lower bound for $i_{M}(G)$ where $G$ is a graph  with $\chi(G)$  as chromatic number and next we conclude that  $\chi(G)= i_{M}(G)=t$  in complete t-partite graphs.
 By using it, we obtain an equivalent condition  for shellability in complete t-partite graphs as following theorem:\\
\begin{theorem}\label{main}
Let $G$ be a complete t-partite graph. $G$ is shellable if and
only if  $G$  be t-colorable such that exactly one of color
classes has arbitrary elements and other classes have only one
element.
\end{theorem}
\begin{definition}
 A simplicial complex $\Delta$ is recursively defined to be vertex decomposable
if it is either a simplex, or else has some vertex $v$ so that\\
i)Both $\Delta\setminus v$ and $link_{\Delta} ^{v}$ are vertex
decomposable and  \\ ii)No face of $link_{\Delta}^{v}$ is a facet
of $\Delta\setminus v$, where \\$$link_{\Delta}^{F}=\{G: G\cap
F=\emptyset , G\cup F\in \Delta\}.$$
\end{definition}
We call a graph $G$  vertex decomposable if the simplicial complex
$\Delta_{G}$ is vertex decomposable.
\\By using above theorem, we present an equivalent condition for vertex decomposability in complete t-partite graphs
as following corollary:\\
\begin{corollary}\label{main}
Let $G$ be a complete t-partite graph. $G$ is vertex decomposable
if and only if $G$ be t-colorable  such that exactly one of color
classes has arbitrary elements and other classes have only one
element.
\end{corollary}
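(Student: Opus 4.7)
The plan is to derive the corollary from the preceding theorem by appealing to the standard fact (valid in the non-pure Bj\"orner--Wachs framework) that vertex decomposability implies shellability. For the ``only if'' direction, if $\Delta_G$ is vertex decomposable then it is shellable, and the theorem immediately supplies the claimed $t$-coloring, so essentially all the work lies in the converse.

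For the ``if'' direction I would induct on $t$. After relabelling, assume $|V_1|=n_1$ is arbitrary while $V_i=\{v_i\}$ is a singleton for each $i\ge 2$. The base case $t=1$ is immediate: $G$ has no edges and $\Delta_G$ is the full simplex on $V_1$, hence vertex decomposable. For $t\ge 2$ I would take $v_2$ as the candidate shedding vertex. Because $v_2$ is adjacent to every other vertex of $G$, the only face of $\Delta_G$ containing $v_2$ is $\{v_2\}$ itself, so $link_{\Delta_G}^{v_2}=\{\emptyset\}$, which is vacuously a simplex and hence vertex decomposable. The deletion $\Delta_G\setminus v_2$ coincides with $\Delta_{G'}$, where $G'=G\setminus v_2$ is a complete $(t-1)$-partite graph satisfying exactly the same structural hypothesis, and so by induction $\Delta_{G'}$ is vertex decomposable. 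Finally, the facets of $\Delta_G\setminus v_2$ are $V_1,V_3,\dots,V_t$, all nonempty, so the only face $\emptyset$ of $link_{\Delta_G}^{v_2}$ is not a facet of the deletion. All three clauses in the recursive definition therefore hold, and $\Delta_G$ is vertex decomposable.

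The main subtlety is that $\Delta_G$ is generically not pure---the facet $V_1$ can have much larger dimension than the singleton facets $\{v_i\}$---so one must work with the non-pure versions of both shellability and vertex decomposability. Since the preceding theorem is already formulated in this generality, invoking the non-pure implication ``vertex decomposable $\Rightarrow$ shellable'' is standard, and the shedding-vertex recursion above then proceeds without further obstruction. Beyond this conceptual point no delicate combinatorics is required: the coloring hypothesis forces the choice of shedding vertex, and each of the three conditions collapses to a one-line verification.
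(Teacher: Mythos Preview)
Your proof is correct. The ``only if'' direction matches the paper exactly: both invoke the implication vertex decomposable $\Rightarrow$ shellable (the paper via \cite{W}, Corollary 7) and then apply the preceding theorem.

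The ``if'' direction is where you and the paper diverge. The paper simply observes that under the coloring hypothesis the graph $G$ is chordal (the $t-1$ singleton parts form a clique, each of whose vertices is adjacent to every vertex of $V_1$, and one checks that any cycle of length $\ge 4$ must contain two such singleton vertices, hence a chord) and then cites Woodroofe's theorem that every chordal graph is vertex decomposable. Your argument instead unwinds the recursion by hand: you pick a singleton vertex $v_2$ as shedding vertex, verify directly that its link is $\{\emptyset\}$ and that the deletion is a complete $(t-1)$-partite graph of the same shape, and induct on $t$. Both approaches are valid in the non-pure Bj\"orner--Wachs setting, which you rightly flag as the relevant framework. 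The paper's route is shorter because it outsources the combinatorics to \cite{W}; yours is more self-contained and makes the shedding sequence explicit, which is arguably more informative for this particular family of graphs.
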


\begin{definition}
A local ring $(R, m)$ is called Cohen-Macaulay if $depth(R) = dim(R)$. If $R$ is non
local and $R _p$ is a Cohen-Macaulay local ring for all $p \in Spec(R)$, then we say that $R$ is a
Cohen-Macaulay ring.
\end{definition}

\begin{definition}
The graph $G$ is said to be Cohen-Macaulay over the field $k$ if $R/ I ( G )$ is a
Cohen-Macaulay ring.
\end{definition}
\begin{definition}
Let $G$ be a graph with vertices $V(G)=\{x_1,\ldots,x_n\}$. A
subset $A\subseteq V(G)$  is a vertex cover of $G$ if every edge in $G$ is incident to
some vertex in $A$. A vertex cover $A$ is minimal if no proper subset of
$A$ is a vertex cover.
\end{definition}

\begin{definition}
A graph $G$ is called unmixed if all the minimal vertex covers
of $G$ have the same number of elements.
\end{definition}
Herzog, Hibi, and Zheng [8] proved that a chordal graph
is Cohen-Macaulay if and only if it is unmixed. By using it, we show  when a complete t-partite graph is Cohen- Macaulay as following corollary:
\begin{corollary}\label{main}
Let $G$ be a complete t-partite graph. $G$ is Cohen-Macaulay graph if and only if $G$ be t-colorable such that all color classes have exactly one element.
\end{corollary}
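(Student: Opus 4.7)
My approach rests on two pillars: the necessary condition that a Cohen--Macaulay graph be unmixed, and a direct topological obstruction that will exclude the ``all parts of equal size $\geq 2$'' case.

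First I would determine the minimal vertex covers of $G=K_{n_1,\dots,n_t}$. A set $A\subseteq V(G)$ is a vertex cover iff its complement is an independent set, and since any two vertices from different parts are adjacent, every independent set of $G$ lies inside a single part $V_i$. Consequently the maximal independent sets of $G$ are precisely $V_1,\dots,V_t$, and the minimal vertex covers are the sets $V\setminus V_i$ for $1\le i\le t$, of cardinalities $n-n_i$.

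Now assume $G$ is Cohen--Macaulay (I take $t\ge 2$; the case $t=1$ is degenerate). Cohen--Macaulayness forces unmixedness, so the minimal primes of $I(G)$ share a common height; thus $n_1=\cdots=n_t=:m$. If $m=1$, then $G=K_t$, which is the desired conclusion. If $m\ge 2$, I would derive a contradiction as follows. The facets of the independence complex $\Delta_G$ are the pairwise disjoint parts $V_1,\dots,V_t$, so $\Delta_G$ is a disjoint union of $t\ge 2$ full simplices of dimension $m-1\ge 1$. This complex is disconnected yet has positive dimension; applying Reisner's criterion to the empty face would require $\tilde H_0(\Delta_G;k)=0$, contradicting the disconnectedness. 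Hence $m=1$ as claimed.

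For the converse, suppose every $n_i=1$, so that $G=K_t$. This graph is chordal, and its minimal vertex covers $V\setminus\{x_i\}$ all have size $t-1$, so $G$ is unmixed; by the Herzog--Hibi--Zheng theorem quoted earlier in the paper, $G$ is Cohen--Macaulay. The main subtlety I anticipate is that Cohen--Macaulayness is strictly weaker than shellability, so the shellability corollary cannot be invoked to rule out the case $m\ge 2$; the homological/connectivity obstruction is the essential extra ingredient that pins the answer down to $K_t$ among all unmixed complete $t$-partite graphs.
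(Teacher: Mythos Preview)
Your proof is correct. For the converse you follow the paper exactly: $G=K_t$ is chordal and unmixed, so the Herzog--Hibi--Zheng theorem yields Cohen--Macaulayness. For the forward direction the paper takes a shorter path than you do: it simply asserts that if some part has at least two elements then $G$ is not unmixed, and hence not Cohen--Macaulay (citing Villarreal). Your argument is different and more careful: after first deducing unmixedness (forcing all $n_i$ equal to some common $m$), you invoke Reisner's criterion on the disconnected positive-dimensional independence complex to exclude $m\ge 2$. This extra ingredient is not idle: the paper's implication ``some part has $\ge 2$ elements $\Rightarrow$ not unmixed'' is false as stated when all parts have the same size $\ge 2$ (for instance $K_{2,2}$ is unmixed by the paper's own Theorem~2.5), so your connectivity obstruction is exactly what is needed to close that residual case. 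What the paper's route buys is brevity (no homological input beyond unmixedness); what yours buys is a genuinely complete argument.
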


\section{Main results}
\begin{lemma}\label{main}
Let $G$ be a graph. If $\chi(G)=t$ then $i_{M}(G)\geq t$.
\end{lemma}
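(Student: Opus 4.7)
The plan is to exhibit $t$ pairwise distinct maximal independent sets of $G$ coming from a proper $t$-coloring. Since $\chi(G)=t$, fix any proper $t$-coloring with color classes $C_1,\ldots,C_t$; by definition each $C_i$ is an independent set of $G$. For each $i$, extend $C_i$ to a maximal independent set $M_i$ of $G$ (possible because $G$ is finite). Once this is done, the lemma follows immediately as soon as we know that $M_1,\ldots,M_t$ are pairwise distinct.

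The main step, and essentially the only nontrivial observation, is to prove that $M_i\neq M_j$ whenever $i\neq j$. I would argue by contradiction. Suppose $M_i=M_j$ for some $i\neq j$. Then $M_i$ contains both $C_i$ and $C_j$, so $C_i\cup C_j\subseteq M_i$ is itself independent. This means there is no edge of $G$ with one endpoint in $C_i$ and the other in $C_j$. But then we may recolor every vertex of $C_j$ with color $i$, producing a proper coloring of $G$ that uses only $t-1$ colors (namely, the color classes become $C_i\cup C_j$ and the remaining $C_k$ for $k\notin\{i,j\}$). This contradicts the hypothesis $\chi(G)=t$.

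Thus $M_1,\ldots,M_t$ are $t$ distinct maximal independent sets, and $i_M(G)\geq t$. The conceptual core is recognizing that the minimality of $\chi(G)$ forces any two color classes to be joined by at least one edge; this is exactly what prevents two of the extended sets $M_i,M_j$ from collapsing to the same maximal independent set. No case analysis or combinatorial estimate is needed beyond this, so I do not anticipate a serious obstacle.
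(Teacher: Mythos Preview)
Your proof follows exactly the same strategy as the paper's: take the color classes of a proper $t$-coloring and extend each to a maximal independent set. Your version is in fact more complete than the paper's, since you supply the argument that the resulting sets $M_1,\ldots,M_t$ are pairwise distinct (using that $\chi(G)=t$ forbids merging two color classes into one), whereas the paper simply asserts ``hence there exist at least $t$ maximal independent sets'' without justifying distinctness.
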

\begin{proof}
Since  $\chi(G)=t$ then there exist $t$ color classes for $G$,
hence we consider $G$ as a t-partite graph. We suppose that
$V_{i}$ is set of elements i-th color class. By definition,
$V_{i}$ is  a  independent set of $G$, then there exists  maximal
independent set $F_i$ that $V_i\subseteq F_i$, for all $1\leq
i\leq t$. Hence, there exist at least $t$ maximal independent sets
for $G$.
\end{proof}
\begin{remark}\label{main}
By definition of complete t-partite graphs we have that
$i_{M}(G)= t$.
\end{remark}
The first author represented sufficient condition  of following
theorem in [11].
\begin{theorem}\label{main}
Let $G$ be a complete t-partite graph. $G$ is shellable if and
only if  $G$  be t-colorable such that exactly one of color
classes has arbitrary elements and other classes have only one
element.
\end{theorem}
\begin{proof}
$\Leftarrow)$ We have to find a shelling $F_1,\ldots,F_t$ for
$\Delta_G$. Let $V(G)=\{x_1,\ldots,x_n\}$. Since proper t-vertex
coloring gives a  partition of $V(G)$ into $t$ color classes, we
suppose that the set of elements of i-th color class be $V_i$. By
assumption,\\$V_1=\{x_1,\ldots,x_m\}$, $m=n-t+1$,
$V_i=\{x_{m+i-1}\}$ for all $2\leq i\leq t$. Any $V_i$, $1\leq
i\leq t$ is an independent set. Since for any $2\leq i\leq t$, if
$x_{m+i-1}\in V_1$ then we can replace $V_1$ by $V_{1}\cup V_{i}$
and obtain a  $t-1$-partition for $G$ that is a contradiction to
the assumption that $t$ is the smallest number with this
property. So $V_1$ is a maximal independent set of $G$ and a
facet of $\Delta_G$. We put $F_1=V_1$. On the other hand, we have
$V_i=\{x_{m+i-1}\}$ as an independent set of $G$, for all $1\leq i\leq t$. With the same
argument, we have $F_i= V_i$. Therefore, we find an ordering on
the facets of $\Delta_G$ as follows:\\$F_1=\{x_1,\ldots,x_m\}$,
$F_2=\{x_{m+1}\}$ ,$\ldots$, $F_t=\{x_{m+t-1}\}.$
\\Since $F_i\setminus F_1=\{x_{m+i-1}\}$ for
all $2\leq i\leq t$, then $F_1,\cdots F_t$ is a shelling of
$\Delta_G $.\\$\Rightarrow)$Let $V_1,\ldots,V_t$ be a partition of
$V(G)$ by definition complete t-partite graph.
 Since $i_{M}(G)= t$ then $\Delta _G$ has exactly $t$ facets. Let
$F_1,\ldots,F_t$ be a shelling of $\Delta_G$. Since $V_i$'s are
maximal independent sets then
$\{V_1,\ldots,V_t\}$=$\{F_1,\ldots,F_t\}$. Without loss of
generality, suppose that $F_i= V_i$ for all $2\leq i\leq t$. We
know that $\{F_1,\ldots,F_t\}$ is a shelling, hence there exists
$x_2\in F_2 \setminus F_1$ such that $F_2 \setminus F_1=\{x_2\}$.
Therefore $F_2=(F_2 \setminus F_1)\cup(F_2\cap
F_1)=\{x_2\}$.\\Now, suppose by induction that
$F_1=\{x_1,\ldots,x_m\}$, $F_2=\{x_{m+1}\}$,\ldots,$F_{i}=\{x_{m+i-1}\}$.
Since $\{F_1,\ldots,F_t\}$ is a shelling of $\Delta_G$ then there
exists $x_{i+1}\in F_{i+1} \setminus F_1$ and $l\in
\{1,\ldots,i\}$ such that $F_{i+1} \setminus F_l=\{x_{i+1}\}$,
then  $F_{i+1}=(F_{i+1} \setminus F_l)\cup(F_{i+1}\cap
F_l)=\{x_{i+1}\}$. Hence, one of color classes has arbitrary
elements and other classes have exactly one element.
\end{proof}
\begin{corollary}\label{main}
Let $G$ be a complete t-partite graph. $G$ is vertex decomposable
if and only if $G$ be t-colorable such that exactly one of color
classes has arbitrary elements and other classes have only one
element.
\end{corollary}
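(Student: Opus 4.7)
The plan is to handle the two directions separately. For $(\Rightarrow)$ I would appeal to the standard implication that every vertex decomposable simplicial complex is shellable (Bj\"orner--Wachs); combining this fact with the preceding theorem immediately yields the required coloring condition, so the real work lies in $(\Leftarrow)$.

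For $(\Leftarrow)$ I would write $V_1 = \{x_1, \ldots, x_m\}$ for the large color class and $V_i = \{x_{m+i-1}\}$ for $2 \le i \le t$, and then induct on $t$. The base case $t = 1$ gives a graph with no edges, whose independence complex is the full simplex on $V(G)$ and is thus vertex decomposable by definition. For the inductive step ($t \ge 2$) I would single out the singleton vertex $v := x_{m+1} \in V_2$. Since $v$ is adjacent to every other vertex of $G$, the only independent set containing $v$ is $\{v\}$ itself, so $link_{\Delta_G}^{v} = \{\emptyset\}$, which is a simplex and therefore vertex decomposable. Next, $G \setminus v$ is a complete $(t-1)$-partite graph retaining the ``one big class, the rest singletons'' structure, so $\Delta_G \setminus v = \Delta_{G \setminus v}$ is vertex decomposable by the inductive hypothesis. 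Finally, the only face of $link_{\Delta_G}^{v}$ is $\emptyset$, while the facets of $\Delta_G \setminus v$ are $V_1, V_3, \ldots, V_t$, all nonempty; hence no face of the link is a facet of the deletion, and condition (ii) is satisfied.

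The main obstacle is the \emph{choice of vertex} $v$: condition (ii) is the delicate one. Picking $v$ inside the large class $V_1$ would be a mistake, because then $link_{\Delta_G}^{v}$ would be the full simplex on $V_1 \setminus \{v\}$, and that very set $V_1 \setminus \{v\}$ would reappear as a facet of $\Delta_G \setminus v$, violating (ii). Selecting $v$ from a singleton class collapses the link to $\{\emptyset\}$ and makes the verification automatic. Apart from this observation, the argument is a straightforward induction driven by the fact that removing a singleton class decreases $t$ by one while preserving the required structure.
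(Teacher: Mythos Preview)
Your proof is correct. The $(\Rightarrow)$ direction matches the paper's argument exactly: both use the implication ``vertex decomposable $\Rightarrow$ shellable'' (the paper cites Woodroofe's [5, Corollary~7], you cite Bj\"orner--Wachs) and then invoke the preceding theorem.

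The $(\Leftarrow)$ direction, however, is genuinely different. The paper observes that under the stated coloring condition the graph $G$ is chordal, and then quotes Woodroofe's theorem that every chordal graph is vertex decomposable. Your argument instead verifies vertex decomposability directly, by induction on $t$: you peel off a singleton class via the vertex $v$, note that $link_{\Delta_G}^{v}=\{\emptyset\}$ is a simplex, that $\Delta_G\setminus v$ is the independence complex of a complete $(t-1)$-partite graph of the same shape, and that condition~(ii) holds because the deletion has only nonempty facets. Your remark that choosing $v$ from the large class would break condition~(ii) is accurate and a nice diagnostic.

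The trade-off is the usual one: the paper's route is a two-line citation but imports a nontrivial external result, whereas your route is self-contained and elementary, at the cost of a short explicit induction. Both are perfectly acceptable here; yours has the advantage of not depending on the chordal machinery at all.
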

\begin{proof}
$\Rightarrow)$ We suppose that for any proper t-vertex coloring of
$G$, there exist at least two classes with at least two elements.
Then $G$ isn't shellable by previous theorem. Hence, $G$ isn't
vertex decomposable by [5,Corollary 7].
\\ $\Leftarrow)$
 By assumption, we
conclude that $G$ is a chordal graph. Hence, $G$ is vertex
decomposable by [5,Corollary 7].
\end{proof}
\begin{theorem}\label{main}
Let $G$ be a complete t-partite graph. $G$ is unmixed if and only if $G$ be t-colorable such that all color classes have the same number of elements.
\end{theorem}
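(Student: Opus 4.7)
The plan is to translate the unmixedness condition into a statement about the sizes of the color classes, using the well-known duality between maximal independent sets and minimal vertex covers. This duality says $A \subseteq V(G)$ is a minimal vertex cover of $G$ if and only if $V(G) \setminus A$ is a maximal independent set of $G$; I would cite or briefly verify this standard fact at the start.

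Next, I would identify the maximal independent sets of a complete $t$-partite graph. Because $G$ contains every edge between distinct parts $V_i$ and $V_j$, any independent set is contained in a single part, and each $V_i$ is itself independent. Therefore the maximal independent sets are precisely $V_1,\dots,V_t$; this is exactly the content of the remark giving $i_{M}(G) = t$. By the duality above, the minimal vertex covers of $G$ are exactly the complements $C_i := V(G)\setminus V_i$ for $1 \leq i \leq t$, and $|C_i| = n - n_i$ where $n = n_1 + \cdots + n_t$.

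With this setup, both directions are immediate. For $(\Leftarrow)$, if all color classes have the same cardinality $n_1 = \cdots = n_t$, then $|C_i| = n - n_i$ is the same for every $i$, so all minimal vertex covers of $G$ have the same size and $G$ is unmixed. For $(\Rightarrow)$, if $G$ is unmixed then $|C_i| = |C_j|$ for all $i,j$, which forces $n - n_i = n - n_j$, i.e., $n_i = n_j$. Hence all color classes of the (unique, up to permutation) partition guaranteed by the definition of a complete $t$-partite graph have the same number of elements.

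There is no real obstacle here; the only thing that requires a small amount of care is making the argument insensitive to the choice of $t$-coloring. Since the problem specifies $t$ as the \emph{smallest} integer for which $G$ admits such a partition, the parts $V_1,\dots,V_t$ are uniquely determined (the equivalence classes of the non-adjacency relation on $V(G)$), so speaking of ``the'' color classes is unambiguous, and the argument above is complete.
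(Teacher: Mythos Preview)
Your proof is correct and follows essentially the same approach as the paper: both arguments rest on the observation that the minimal vertex covers of a complete $t$-partite graph are exactly the sets $V(G)\setminus V_i$, so unmixedness is equivalent to $n-n_i$ being constant, i.e., all $n_i$ equal. You supply more justification (via the independence--cover duality and the identification of the maximal independent sets) than the paper's one-sentence proof, but the underlying idea is identical.
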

\begin{proof}
Since any minimal vertex cover of
complete t-partite graph $G$ contains all the elements of $t-1$
classes, then  any $t-1$ classes have the same number of elements
if and only if cardinality of classes be similar.
\end{proof}
\begin{corollary}\label{main}
Let $G$ be a complete t-partite graph. $G$ is Cohen-Macaulay
graph if and only if $G$ be t-colorable such that all color
classes have exactly one element.
\end{corollary}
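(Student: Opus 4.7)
The plan is to combine the characterization of unmixed complete $t$-partite graphs just established with the Herzog--Hibi--Zheng equivalence between being chordal and unmixed on the one hand and being Cohen--Macaulay on the other. The Cohen--Macaulay hypothesis will force the common class size to equal $1$ via a topological connectivity argument on the independence complex $\Delta_G$.

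For the sufficient direction, suppose that every color class has exactly one element. Then $G$ is the complete graph $K_t$, which is trivially chordal, and each minimal vertex cover has cardinality $t-1$, so $G$ is unmixed. The Herzog--Hibi--Zheng theorem then gives that $G$ is Cohen--Macaulay.

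For the necessary direction, assume $G$ is Cohen--Macaulay and hence unmixed. By the preceding theorem, the $t$ color classes share a common size $n_{0}$. To rule out $n_0 \geq 2$, I would argue by contradiction: by the remark, $\Delta_G$ has exactly $t$ facets, which must coincide with the pairwise disjoint independent sets $V_1,\ldots,V_t$. Hence $\Delta_G$ is the disjoint union of $t$ simplices, each of positive dimension $n_0-1$, and in particular is disconnected. This contradicts the standard fact that a Cohen--Macaulay simplicial complex of dimension at least $1$ is connected (an immediate consequence of Reisner's criterion, which forces $\tilde{H}_0(\Delta_G; k) = 0$ as soon as $\dim \Delta_G \geq 1$). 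Thus $n_0 = 1$, as desired.

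The main obstacle is the invocation of this connectivity principle, since it is not recorded explicitly in the excerpt. If one prefers to avoid citing Reisner's criterion, an alternative route is to observe that the Stanley--Reisner ring of a disjoint union of two or more positive-dimensional simplices has depth equal to $1$, while its Krull dimension equals $n_0 \geq 2$; this strict depth/dimension gap directly contradicts the Cohen--Macaulay hypothesis and completes the argument without any topological prerequisites beyond those implicit in the definition of Cohen--Macaulayness.
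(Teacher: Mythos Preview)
Your argument is correct, and in the necessary direction it is in fact more complete than the paper's own proof. For sufficiency you and the paper proceed identically: $G=K_t$ is chordal and unmixed, so the Herzog--Hibi--Zheng theorem yields Cohen--Macaulayness.

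For necessity the paper argues by contrapositive: assuming some colour class has at least two elements, it asserts that $G$ is not unmixed and then invokes the standard implication ``Cohen--Macaulay $\Rightarrow$ unmixed'' from Villarreal's book. That assertion, however, does not cover every case: if \emph{all} classes have a common size $n_0\ge 2$ (for instance $K_{2,2}$), Theorem~2.5 says $G$ \emph{is} unmixed, so unmixedness alone cannot finish the job. Your route---first using unmixedness to force a common class size $n_0$, and then ruling out $n_0\ge 2$ by observing that $\Delta_G$ would be a disjoint union of $t$ simplices of positive dimension, contradicting the connectedness of Cohen--Macaulay complexes of dimension at least $1$ (or, equivalently, the depth computation you sketch)---is precisely the extra step needed to close this gap. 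So your approach is genuinely different in the forward direction, and what it buys is a complete proof where the paper's is lacunary.
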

\begin{proof}
$\Leftarrow)$ By assumption, we have $G=K_t$ then $G$ is a chordal
graph. By[8,Theorem 2.1]$G$ is Cohen-Macaulay if and only if $G$
is unmixed. Hence, the conclusion follows from Theorem
2.5.\\$\Rightarrow)$ We suppose that one of color classes has at
least two elements.Therefore, $G$ isn't unmixed and hence $G$
isn't Cohen-Macaulay by[4,Proposition 6.1.21].
\end{proof}
By above Corollary and Theorem 2.3, we have following corollary:
\begin{corollary}\label{main}
Let $G$ be a complete t-partite graph. The property of being Cohen-Macaulay for
$G$ is equivalent to being shellabel if and only if $G$ is t-colorable such that all color
classes have exactly one element.
\end{corollary}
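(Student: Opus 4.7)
The plan is to obtain this corollary by intersecting the two characterizations already in hand: Corollary 2.7, which says a complete t-partite graph $G$ is Cohen-Macaulay iff every color class is a singleton, and Theorem 2.3, which says $G$ is shellable iff at most one color class has more than one element. Comparing the two conditions, they coincide precisely when every color class is a singleton, so the whole argument is just a bookkeeping exercise around these two prior results.

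For the backward direction, I would assume all color classes have exactly one element, so $G = K_t$. Corollary 2.7 gives that $G$ is Cohen-Macaulay, and Theorem 2.3 applies (with any chosen class serving as the ``arbitrary'' one, whose size happens to be one) to give that $G$ is shellable. Since both properties hold on $G$, the equivalence ``$G$ is CM iff $G$ is shellable'' is true on $G$.

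For the forward direction I would argue by contrapositive: suppose some color class has size at least two. Corollary 2.7 immediately gives that $G$ is not Cohen-Macaulay. If exactly one class has size at least two and the remaining classes are singletons, then Theorem 2.3 gives that $G$ is shellable, producing a direct mismatch between the two properties (shellable but not CM), which breaks the claimed equivalence.

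The main obstacle is the remaining subcase in which two or more classes have size at least two: here $G$ fails both properties, so the material biconditional ``CM iff shellable'' actually holds vacuously on $G$. To finish the forward direction one therefore has to interpret the corollary's ``equivalence'' in the substantive sense suggested by the lead-in ``by above Corollary and Theorem 2.3'' --- namely that the two properties should hold non-vacuously on $G$ --- and show that under this reading the right-hand side also fails, as neither property holds. Once that reading is adopted, the proof reduces exactly to the intersection of Corollary 2.7 with Theorem 2.3.
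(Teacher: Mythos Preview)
Your approach is exactly the paper's: the paper offers no argument beyond the sentence ``By above Corollary and Theorem 2.3, we have following corollary,'' so combining the two characterizations is all that is intended.

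You have, however, put your finger on a genuine defect that the paper does not acknowledge. In the contrapositive of the forward direction, the subcase where \emph{two or more} parts have size at least two yields a graph that is neither Cohen--Macaulay (by Corollary~2.6) nor shellable (by Theorem~2.3); hence the material biconditional ``$G$ is Cohen--Macaulay $\Longleftrightarrow$ $G$ is shellable'' is vacuously true there, and the forward implication of the corollary fails as literally stated. Your proposed repair---reading ``equivalent'' as ``both properties hold''---does salvage the argument, and is the only reading under which the corollary becomes a correct consequence of the two cited results. But you should be aware that this is a flaw in the \emph{statement} rather than a gap your proof can close: no proof of the corollary as written can succeed, and the paper's one-line justification simply overlooks the issue.
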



\begin{thebibliography}{99}
\bibitem{D}
Alexander B. Dainiak, \emph{Sharp bound for the number of maximal
independent sets in trees of fixed
diameter}, (2008), http:arxive.org/0812.4948.
\bibitem{M}
E, Miller and B.~Sturmfels. \emph{Combinatorial commutative algebra},
Springer,( 2005).
 \bibitem{T}
 Adam Van Tuyl and Rafael H. Villarreal, \emph {Shellable graphs and sequentially Cohen-
Macaulay bipartite graphs}, J. Combin. Theory Ser. A 115 (2008), no. 5, 799�814,
arXiv:math/0701296.
 \bibitem{V}
R. H.~Villarreal,\emph{ Monomial Algebras}, Monographs and Textbooks
in Pureand Applied Mathematics, vol. 238, Marcel Dekker Inc, New
York, (2001).
 \bibitem{W}
 R. Woodroofe,\emph{Vertex decomposable graphs and obstructions to
 shellability}, Proc. Amer. Math. Soc. {137}(2009), no. 8,
 3235-3246.
 \bibitem{R}
 David R.Wood,\emph{On the number of maximal independent sets in a graph}, Discrete Mathematics and Theoretical Computer Science, vol 13, No 3, 17-20(2011).
  \bibitem{U}
 C. Promsakon and C. Uiyyasathian, \emph{Edge-Chromatic Numbers of Glued Graphs},Thai Journal of Mathematics
Volume 4(2006) Number 2, 395-401.
\bibitem{Z}
J. Herzog, T. Hibi, and X. Zheng, \emph {Cohen-Macaulay chordal graphs}, J. Combin. Theory Ser. A 113
(2006), no. 5, 911�916.
\bibitem{P}
H. Prodinger and R.F. Tichy, \emph {Fibonacci numbers of graphs}, Fibonacci Quart. 20 (1982),
16�21.
\bibitem{HH}
Hiu-Fai Law,\emph{On the number of independent sets in a tree},
Electr. J. Comb.17(2010 ).
\bibitem{KS}
Dariush Kiani ansd Seyyede Masoome Seyyedi,\emph {Vertex
decomposable and shellable complete t-partite graphs}, 20th
Seminar on Algebra,(Apr.22-23,2009), TMU, Iran.


\end{thebibliography}
\end{document}